\def\N{{\Bbb N}}
\def\P{{\Bbb P}}
\def\Q{{\Bbb Q}}
\numberwithin{equation}{section}
\newcommand {\Cal}{\mathcal}
\newtheorem{lemma}{Lemma}[section]
\newtheorem{theorem}[lemma]{Theorem}
\newtheorem{definition}[lemma]{Definition}
\newtheorem{remark}[lemma]{Remark}
\def\leq{\leqslant}
\begin{document}
\title{Schmidt's subspace theorem  for moving hypersurface targets in subgeneral position with index in algebraic variety }

\author{Tingbin Cao\thanks{The first author is supported by the National Natural Science Foundation of China (\#11871260).} and Nguyen Van Thin\thanks{Corresponding author: Nguyen Van Thin.}}
\date{}
\maketitle

\vspace{-0.5cm}
\begin{abstract}
 Recently, Xie-Cao \cite{XC} obtained a Second Main Theorem for moving hypersurfaces located in subgeneral position with index which is extended the result of Ru \cite{R3}. By using some methods due to Son-Tan-Thin \cite{STT}, Quang \cite{Q2} and Xie-Cao \cite{XC}, we shall give a Schmidt's Subspace Theorem for moving hypersurface targets in subgeneral position with index intersecting algebraic variety. Our result is a extension the Schmidt's Subspace Theorem due to Son-Tan-Thin \cite{STT} and Quang \cite{Q2}.

\noindent {\it Keywords}: Diophantine approximation, Schmidt's Subspace Theorem.\\
 Mathematics Subject Classification 2010. 11J68, 11J25, 11J97.

\end{abstract}

\section{Some definitions and result}
Let $k$ be an algebraic number field of degree $d$. Denote $M(k)$ by the set of places (i.e., equivalent classes of absolute values) of $k$ and write
$M_{\infty}(k)$ for the set of Archimedean places. From $v \in M(k)$, we choose the normalized absolute value $| . |_{v}$ such that
$| . |_{v}=| . |$ on $\Q$ (the standard absolute value) if $v$ is archimedean, whereas for $v$ non-archimedean $|p|_{v}=p^{-1}$
if $v$ lies above the rational prime $p$. Denote by $k_v$ the completion of $k$ with respect to $v$ and by $d_v=[k_v: \Q_v]$ the local
degree. We put $\Vert.\Vert_{v}=| . |_{v}^{d_v/d}$. Then norm $||.||_v$ satisfies the following properties:

$(i)$ $||x||_v\ge 0$, with equality if and only if $x=0;$

$(ii)$ $||xy||_v=||x||_v||y||_v$ for all $x, y \in k;$

$(iii)$ $||x_1+\dots+x_n||_v\le B_v^{n_v}\cdot \max \{||x_1||_v, \dots, ||x_n||_v\}$ for all $x_1, \dots, x_n \in k$, $n\in \N$, where $n_v=d_v/d$, $B_v=1$ if $v$ is non-archimedean and $B_v=n$ if $v$ is archimedean.\\
Moreover, for each $x\in k\setminus \{0\}$, we have the  following product formula:
$$ \prod_{v\in M(k)}\Vert x\Vert_{v}=1. $$
For $v\in M(k)$, we also extend $\Vert .\Vert_{v}$ to an absolute value on the algebraic closure $\overline{k}_v.$\par

For $x\in k$, the  logarithmic height of $x$ is defined by $h(x)=\sum_{v\in M(k)} \log^{+}\Vert x\Vert_{v} $, where $\log^{+}\Vert x\Vert_{v}= \log \max \{\Vert x\Vert_{v}, 1\}.$ For ${\bf x}=[x_0: \dots : x_M] \in \mathbb P^M(k)$, we denote $x=(x_0,\dots,x_M)\in k^{M+1}.$ We set $\Vert x\Vert_{v}=\max_{0 \le i \le M}\Vert x_i\Vert_{v},$ and define the logarithmic height of ${\bf x}$ by
\begin{eqnarray}\label{ct11s}
h({\bf x})=\sum_{v\in M(k)} \log \Vert x\Vert_{v}.
\end{eqnarray} For a positive integer $d$, we set
\begin{eqnarray*}
\Cal T_d := \big\{ (i_0,\dots,i_M) \in \N_0^{M+1}:
i_0 + \dots + i_M = d \big\}.
\end{eqnarray*}
Let $Q$ be a  homogeneous polynomial of degree $d$ in $k[x_0,\dots, x_M].$ We write
\begin{eqnarray*}
Q= \sum\limits_{I \in \Cal T_{d}} a_{I}x^I.
\end{eqnarray*}
A (fixed) hypersurface $D$ of degree $d$ in $\mathbb P^M(k)$ is the zero set of a homogeneous polynomial $Q,$ that is
$$ D=\{(x_0:\dots:x_M)\in \mathbb P^M(k): Q(x_0,\dots,x_n)=0\}.$$
 Set $\Vert Q\Vert_v:=\sum_{I\in \mathcal T_d}\Vert a_I\Vert_v.$ The height of $Q$ (or $D$) is denoted by $h(Q)$ (or $h(D)$) and given by
\begin{eqnarray*}
h(Q):=\sum_{v\in M(k)}\log\Vert Q\Vert_v.
\end{eqnarray*}
Let $D$ be a hypersurface which is defined by the zero of homogeneous polynomial $Q.$  For each $v\in M(k),$ the Weil function $\lambda_{D,v}$ is defined by
\begin{eqnarray*}
\lambda_{D,v}({\bf x}):=\log\frac{\Vert x\Vert^d_v\cdot \Vert Q\Vert_v}{\Vert Q(x)\Vert_v}, \quad {\bf x}\in \P^M(k)\setminus D.
\end{eqnarray*}

Let $\Lambda$ be an infinite index set.  We call  a moving hypersurface $D$ in $\P^M(k)$ of degree $d,$ indexed by $\Lambda$ is collection of hypersurfaces $\{D(\alpha)\}_{\alpha\in\Lambda}$ which are defined by the zero of  homogeneous polynomials $\{Q(\alpha)\}_{\alpha\in \Lambda}$ in $k[x_0,\dots,x_M]$ respectively. Then, we can write $Q=\sum_{I\in\mathcal T_d}a_I x^I,$ where $a_I$'s are functions from $\Lambda$ into $k$ having no common zeros points, and called it by moving homogeneous polynomial.\par

Throughout  this paper, we consider an infinite index $\Lambda;$ a set $\Cal Q:=\{D_1,\dots,D_q\}$ of moving hypersurfaces in $\P^M(k),$ indexed by $\Lambda,$ which are defined by the zero of moving homogeneous polynomials $\{Q_1,\dots,Q_1\}$ in $k[x_0,\dots,x_M]$ respectively, an arbitrary projective variety $V\subset\P^M(k)$ of dimension $n$ generated by the homogeneous  ideal $\Cal I(V)$. We write  $$Q_j= \sum_{I\in \mathcal T_{d_j}}a_{j, I}x^{I}\;(j=1,\dots,q),\;\text{where}  \;d_j=\deg Q_j.$$\par

Let $A\subset\Lambda$ be an infinite subset and denote by $(A,a)$ each set-theoretic map $a: A\to k.$
Denote by $\mathcal R_{A}^{0}$  the set of equivalence classes of pairs $(C,a)$, where $C\subset A$ is a subset with finite complement and $a: C \to k$ is a map; and the equivalence relation is defined as follows: $(C_1, a_1) \sim (C_2, a_2)$
if there exists $C \subset C_1\cap C_2$ such that $C$ has finite complement in $A$ and $a_1|_{C}=a_2|_{C}.$ Then $\mathcal R_{A}^{0}$ has an obvious ring structure. Moreover, we can embed $k$ into $\mathcal R_{A}^{0}$ as constant functions.\par

\begin{definition} \label{defi1.2s} For each $j\in\{1,\dots,q\}$, we write $\mathcal T_{d_j}=\{I_{j,1}, \dots, I_{j, M_{d_j}}\},$ where $M_{d_j}:=\binom{d_j+M}{M}.$ A subset $A\subset \Lambda$ is said to be coherent with respect to
$\Cal Q$ if for every polynomial
$ P\in k[x_{1,1}, \dots, x_{1, M_{d_1}}, \dots, x_{q,1}, \dots, x_{q, M_{d_q}}] $
that is homogeneous in $x_{j,1}, \dots, x_{j, M_{d_j}}$ for each $j=1, \dots, q$, either\\
$ P(a_{1, I_{1,1}}(\alpha), \dots, a_{1, I_{1, M_{d_1}}}(\alpha), \dots, a_{q, I_{q,1}}(\alpha), \dots, a_{q, I_{q, M_{d_q}}}(\alpha))$
vanishes for all $\alpha \in A$ or it vanishes for only finitely many $\alpha \in A.$
\end{definition}

By \cite[Lemma 2.1]{RV}, there exists an infinite coherent subset $A\subset \Lambda$ with respect to $\Cal Q.$ For each $j\in\{1,\dots,q\},$ we fix an index $I_{j}\in\Cal T_{d_j}$ such that $a_{j, I_{j}}\not\equiv  0$ (this means that
$a_{j, I_{j}}(\alpha)\ne 0$ for all, but finitely many, $\alpha\in A$), then $\dfrac{a_{j, I}}{a_{j, I_{j}}}$ defines an element of $\mathcal R_{A}^{0}$ for any
 $I \in \mathcal T_{d_j}$. This element given by the following function:
$$ \{\alpha \in A: a_{j, I_{j}}(\alpha) \ne 0\} \to k, \quad\alpha \mapsto  \frac{a_{j, I}(\alpha)}{a_{j, I_{j}}(\alpha)}.$$
Moreover, by coherent, the subring of $\mathcal R_{A}^{0}$ generated over $k$ by such all elements is an integral domain \cite[page 3]{G}. We define $\mathcal R_{A, \mathcal Q}$ to be the field of fractions of that integral domain.\par

Denote by $\Cal A$ the set of all functions $ \{\alpha \in A: a_{j, I_{j}}(\alpha) \ne 0\} \to k, \quad\alpha \mapsto  \frac{a_{j, I}(\alpha)}{a_{j, I_{j}}(\alpha)}$ and $k_{\Cal Q}$ the set of all formal finite sum
$\sum_{m=1}^st_m\prod_{i=1}^sc_i^{n_i},$ where $t_m\in k,$ $c_i\in \Cal A, n_i\in\N.$\par

Each pair $(\widehat b,\widehat c)\in k_{\Cal Q}^2,$  ($\hat c(\alpha)\ne 0$ for all,  but finitely many,  $\alpha\in A$) defines a set-theoretic function, denoted by $\frac{\hat b}{\hat c}$, from $\{\alpha: \widehat c(\alpha)\ne 0\}$ to $k$,   $\frac{\widehat b}{\widehat c}(\alpha):=\frac{\widehat b(\alpha)}{\widehat c(\alpha)}.$  Denote by $\widehat{\mathcal R}_{A, \mathcal Q}$ the set of all such functions. Each element $a\in\mathcal R_{A, \mathcal Q}$ is a class of some functions $\widehat{a}$ in $\widehat{\mathcal R}_{A, \mathcal Q}.$ We call that $\widehat{a}$ is a special representative of $a.$  It is clear that for any two special representatives $\widehat a_1,\widehat a_2$ of the same element $a\in\mathcal R_{A, \mathcal Q},$ we have $\widehat a_1(\alpha)=\widehat a_2(\alpha)$ for all, but finitely many $\alpha\in A.$ For a polynomial $P:=\sum_I a_Ix^I\in\mathcal R_{A,\Cal Q}[x_0, \dots, x_M],$ assume that $\widehat a_I$ is a special representative  of $a_I.$ Then $\widehat P:=\sum_I \hat a_Ix^I$ is called  a special representative of $P.$  For each $\alpha\in A$ such that all functions $\widehat a_I$'s are well defined at $\alpha,$ we set $\widehat P(\alpha):=\sum_I \hat a_I(\alpha)x^I\in k[x_0,\dots,x_M];$ and we also say that the special representative $\widehat P$  is well defined at $\alpha.$ Note that  each special representative $\widehat P$ of $P$ is well defined at all, but finitely many, $\alpha\in A.$  If  $\widehat P_1, \widehat P_2$ are two special presentatives of $P,$ then   $\widehat P_1(\alpha)= \widehat P_2(\alpha)$ for all, but finitely many $A.$\par

\begin{definition}\label{defi1.5s}
A sequence of points $x=[x_0:\dots : x_M]: \Lambda \to V$ is said to be $V-$algebraically non-degenerate with respect to $\mathcal Q$ if  for each infinite coherent subset $A\subset \Lambda$ with respect to $\mathcal Q$, there is no  homogeneous polynomial $P\in \mathcal R_{A,\Cal Q}[x_0, \dots, x_M] \setminus \mathcal I_{A, \Cal Q}(V)$  such that
$\widehat P(\alpha)(x_0(\alpha), \dots, x_M(\alpha)) =0$ for all, but finitely many, $\alpha\in A$ for some (then  for all) representative $\widehat P$ of $P,$ where $\mathcal I_{A, \Cal Q}(V)$ is the ideal in $\mathcal R_{A, \mathcal Q}[x_0,\dots,x_M]$ genarated by $\Cal I(V).$
\end{definition}

\begin{definition}\label{new1}
Let $m\ge n$ be a positive integer. We say that the family moving hypersurfaces $\Cal Q$  is in $m$-strictly subgeneral position with respective to $V$ if for any subset $I\subset \{1,\dots,q\}$ with $\# I\le m+1,$
$$ \dim \Big(\cap_{i\in I}D_i(\alpha)\cap V(\overline k)\Big)\le m-\#I$$
for all, but finitely many $\alpha\in\Lambda,$ where $\overline k$ is the algebraic closure of $k.$
\end{definition}

From Definition \ref{new1}, we get

\begin{remark}\label{rm1}
Let $m\ge n$ be a positive integer. The family moving hypersurfaces $\Cal Q$  is in $m$-strictly subgeneral position with respective to $V$ implies that  for each $1\le j_0<\dots <j_m \le q, q\ge m+1$,  the system of equations
$$ Q_{j_i} (\alpha)(x_0, \dots, x_M)=0, \hspace{1cm} 0\le i \le m$$
has no solution $(x_0, \dots, x_M)$ satisfying $(x_0:\cdots: x_M) \in V(\overline k),$ for all, but finitely many $\alpha\in\Lambda.$  It means that $\Cal Q$  is in $m$-subgeneral position with respective to $V.$
\end{remark}

\begin{definition}\label{new2}
Let $\kappa$ be a positive integer such that $\kappa\le \dim V.$ We say that the family moving hypersurfaces $\Cal Q$  is in $m$-strictly subgeneral position with index $\kappa$ in $V$ if $\Cal Q$  is in $m$-strictly subgeneral position with respective to $V$  and for any subset $I\subset \{1,\dots,q\}$ such that $\# I\le \kappa,$
$$ \text{codim} \Big(\cap_{i\in I}D_i(\alpha)\cap V(\overline k)\Big)\ge \# I$$
for all, but finitely many $\alpha\in\Lambda.$ Here we set $\dim \emptyset=-\infty.$
\end{definition}

In 1997, Ru-Vojta \cite{RV} established the following Schmidt subspace theorem for the case of moving hyperplanes in projective spaces.\par\vskip 6pt

\noindent {\bf Theorem A.}  {\it Let $k$ be a number field and let $S\subset M(k)$ be a finite set containing all archimedean places. Let $\Lambda$ be an infinite index set and let $\mathcal H:=\{H_1, \dots, H_q\}$  be a set of moving  hyperplanes  in $\mathbb P^{M}(k),$ indexed by $\Lambda.$ Let ${\bf x}=[x_0: \dots: x_M]: \Lambda \to \mathbb P^{M}(k)$ be a sequence of points. Assume that

$(i)$ ${\bf x}$ is linearly nondegenerate with respect to $\mathcal H,$ which mean, for each infinite coherent subset $A\subset \Lambda$ with respect to $\Cal H,$ $x_0|_A,\dots,x_M|_A$ are linearly independent over $\Cal R_{A,\Cal H},$\par

$(ii)$ $h(H_j(\alpha))=o(h({\bf x}(\alpha)))$ for all $\alpha\in \Lambda$ and $j=1, \dots, q$ (i.e. for all $\delta>0$, $h(H_j(\alpha))\leq\delta h({\bf x}(\alpha))$ for all, but finitely many, $\alpha\in \Lambda).$\par

Then, for any $\varepsilon >0,$ there exists an infinite index subset $A\subset \Lambda$ such that
$$ \sum_{v\in S}\max_K\sum_{J\in K}\lambda_{H_j(\alpha), v}({\bf x}(\alpha)) \le (M+1+\varepsilon)h({\bf x}(\alpha))$$
holds for all $\alpha \in A.$ Here the maximum is taken over all subsets $K$ of $\{1,\dots,q\}$, $\#K=M+1$ such that $H_j(\alpha), j\in K$ are linearly independent over $k$ for each $\alpha\in\Lambda.$} \par\vskip 6pt

One of the most important developments in recent years in Diophantine approximation is Schmidt's subspace theorems for fixed hypersurfaces of Corvaja-Zannier \cite{CZ} and Evertse-Ferretti \cite{EF, EF2}. Motivated by their paper, Ru \cite{R2,R3} obtained important results  on the Second Main Theorem for fixed hypersurfaces. Later, Dethloff-Tan \cite{DT} generalized these Second Main Theorems of  Ru to the case of moving results. Basing on method of Dethloff-Tan \cite{DT},  Chen-Ru-Yan \cite{CRY3} and Le \cite{G}  extended Theorem A to the case of moving hypersurfaces in the projective spaces. In 2019, Dethloff-Tan \cite{DT1} extended the result of Ru \cite{R3} for moving target. Via Vojta's dictionary, the Second Main Theorem in Nevanlinna theory corresponds to Schmidt's Subspace Theorem in Diophantine approximation. In 2018, Son-Tan-Thin \cite{STT} gave the counterpart of Second Main Theorem due to Dethloff-Tan \cite{DT1} in Diophantine approximation. In 2018, Quang \cite{Q2} obtainted the Schmidt's subspace theorem for moving hypersurface in subgeneral position. In 2019, Xie-Cao \cite{XC} have been obtained the Second Main Theorem for moving hypersurfaces located in subgeneral position with index.\par

In this paper, by using some methods due to Son-Tan-Thin \cite{STT} and Quang \cite{Q2}, Xie-Cao \cite{XC}, we give a Schmidt's Subspace Theorem for moving hypersurfaces in $m$-subgeneral position with index $\kappa$ as follows:

\begin{theorem}\label{Schmidt} Let $k$ be a number field and let $S \subset M(k)$ be a finite set containing all archimedean places. Let ${\bf x}=[x_0: \dots: x_M]: \Lambda \to V$ be a sequence of points. Assume that\par

$(i)$ $\Cal Q$ is in $m$-strictly subgeneral position with index $\kappa$ in $V$  and ${\bf x}$ is $V-$ algebraically nondegenerate with respect to $\mathcal Q;$\\par

$(ii)$ $h(D_j(\alpha))=o(h({\bf x}(\alpha)))$ for all $\alpha\in\Lambda$ and $j=1, \dots,q$ (i.e. for all $\delta>0$, $h(D_j(\alpha))\leq\delta h({\bf x}(\alpha))$ for all, but finitely many, $\alpha\in \Lambda).$\par

Then, for any $\varepsilon >0,$ there exists an infinite index subset $A\subset \Lambda$ such that
$$ \sum_{v\in S}\sum_{j=1}^{q}d_j^{-1}\lambda_{D_j(\alpha), v}({\bf x}(\alpha)) \le \left(\left(\dfrac{m-n}{\max \{1,\min \{m-n,\kappa\}\}}+1\right)(n+1)+\varepsilon\right)h({\bf x}(\alpha))$$
holds for all $\alpha \in A.$
\end{theorem}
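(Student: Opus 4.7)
The proof will combine three ingredients: (a) the Ru--Vojta passage \cite[Lemma 2.1]{RV} from $\Lambda$ to an infinite coherent subset $A_0$ with respect to $\Cal Q$, so that every ratio $a_{j,I}/a_{j,I_j}$ defines an element of $\Cal R_{A_0,\Cal Q}$ and the $V$-algebraic nondegeneracy hypothesis can be used as a nonvanishing statement over the field $\Cal R_{A_0,\Cal Q}$; (b) a replacement lemma, following Xie--Cao \cite{XC} and Quang \cite{Q2}, which converts the full sum $\sum_{j=1}^q d_j^{-1}\lambda_{D_j(\alpha),v}({\bf x}(\alpha))$ into a weighted sum over $n+1$ chosen hypersurfaces with weight exactly $\frac{m-n}{\max\{1,\min\{m-n,\kappa\}\}}+1$; and (c) the Evertse--Ferretti / Corvaja--Zannier filtration on $H^0(V,\Cal O_V(N))$, which turns the $n+1$ chosen moving hypersurfaces into moving hyperplanes in a higher-dimensional projective space, to which Theorem A applies.

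First I would fix $v\in S$ and, for each $\alpha\in A_0$, reorder the indices by decreasing $d_j^{-1}\lambda_{D_j(\alpha),v}({\bf x}(\alpha))$. After passing to a further infinite subset $A_1\subset A_0$ (by a finite pigeonhole over the permutations of $\{1,\dots,q\}$ and, for each permutation, a further pigeonhole refinement), I may assume the ordering --- and hence the set $K_v$ of the first $n+1$ indices --- is independent of $\alpha$. The $m$-strictly subgeneral position with index $\kappa$ hypothesis then forces a precise drop pattern for $\dim(V\cap\bigcap_{i\le s}D_{j_i}(\alpha))$: Definition \ref{new2} gives codimension increments of exactly $1$ for $s\le\min\{m-n,\kappa\}$, while Definition \ref{new1} gives only the bound $\dim\le m-s$ beyond that. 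Summing by parts against the decreasing Weil-function values yields
$$\sum_{j=1}^q\frac{1}{d_j}\lambda_{D_j(\alpha),v}({\bf x}(\alpha))\le\left(\frac{m-n}{\max\{1,\min\{m-n,\kappa\}\}}+1\right)\sum_{j\in K_v}\frac{1}{d_j}\lambda_{D_j(\alpha),v}({\bf x}(\alpha)),$$
with $\bigcap_{j\in K_v}D_j(\alpha)\cap V(\overline k)=\emptyset$ for all but finitely many $\alpha$; this is the arithmetic analogue of the key replacement step in \cite{XC}.

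Next, fix a large integer $N$ divisible by every $d_j$. Following \cite{Q2,STT}, I would construct a filtration of $(\Cal R_{A_1,\Cal Q}[x_0,\dots,x_M]/\Cal I_{A_1,\Cal Q}(V))_N$ indexed by $(n+1)$-tuples $(b_0,\dots,b_n)$ and built from the normalized moving sections $\widetilde Q_j:=Q_j/a_{j,I_j}$ for $j\in K_v$; the empty-intersection property from the previous step makes the filtration exhaustive, and its graded pieces are controlled via the Hilbert polynomial of $V$ (dimension $\deg V\cdot N^n/n! + O(N^{n-1})$). A standard weighted-sum estimate along the filtration, combined with hypothesis $(ii)$ to absorb the height contributions $h(D_j(\alpha))$ into an $\varepsilon\, h({\bf x}(\alpha))$ error, reduces the problem to Theorem A applied to the resulting basis of $H^0(V,\Cal O_V(N))$ regarded as moving hyperplanes in $\mathbb P^{\dim H^0(V,\Cal O_V(N))-1}(k)$. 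After normalization by $N\cdot\deg V$ this contributes the coefficient $(n+1)+\varepsilon''$ in front of the sum over $j\in K_v$; multiplying by the replacement factor from the previous paragraph then yields the stated bound on an infinite $A\subset A_1$.

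The main obstacle I anticipate is matching these steps consistently in the moving setting. The replacement step requires a single index set $K_v$ valid for all $\alpha$ in an infinite subset, forcing further refinements of $A_0$ that must remain compatible with the coherence of $\Cal Q$; the filtration step requires selecting a basis of $H^0(V,\Cal O_V(N))$ whose associated moving-hyperplane heights still satisfy the $o(h({\bf x}(\alpha)))$ hypothesis of Theorem A; and the delicate numerical bookkeeping --- matching the Abel-summation constant from Step 2 against the empty-intersection constraint in the Hilbert-polynomial estimates of Step 3 --- is exactly where the coefficient $\frac{m-n}{\max\{1,\min\{m-n,\kappa\}\}}+1$ enters. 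Once this bookkeeping is carried out, the remainder of the argument is a combination of techniques already developed in \cite{STT,Q2,XC}.
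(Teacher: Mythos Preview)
Your overall architecture---pass to a coherent subset, reduce the sum over $q$ hypersurfaces to a sum over $n+1$ carrying the factor $\frac{m-n}{\max\{1,\min\{m-n,\kappa\}\}}+1$, build a filtration on degree-$N$ sections of $V$, and finish with Theorem~A---is the same as the paper's.  The gap is in your replacement step.

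You take $K_v$ to be ``the first $n+1$ indices'' after reordering by Weil-function size and then assert $\bigcap_{j\in K_v}D_j(\alpha)\cap V(\overline k)=\emptyset$.  In $m$-strictly subgeneral position with $m>n$ this is false in general: Definition~\ref{new1} only gives $\dim\bigl(\bigcap_{s\le n+1}D_{i_s}(\alpha)\cap V(\overline k)\bigr)\le m-(n+1)$, which is nonnegative.  Nothing prevents the $n+1$ hypersurfaces with the largest Weil functions from all passing through a common point of $V(\overline k)$.  If they do, the filtration in your step~(c) is not exhaustive and the analogue of Lemma~\ref{L4s} fails, so the reduction to Theorem~A collapses.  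No Abel-summation bookkeeping against the actual dimension-drop pattern can repair this: the drop pattern will produce a set of $n+1$ indices with empty intersection, but those indices need not be the first $n+1$, and then the inequality with the stated constant need not hold for that set.

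The paper resolves this by \emph{not} selecting a subset $K_v$ of original hypersurfaces at all.  Instead, Lemma~\ref{lm1} builds new polynomials $P_{J,1},\dots,P_{J,n+1}$ from the first $m+1$ ordered $Q_{i_j}$'s, with $P_{J,t}=Q_{i_t}$ for $t\le\kappa$ and $P_{J,t}=\sum_{j=\kappa+1}^{m-n+t}c_{tj}Q_{i_j}$ for $t>\kappa$, chosen so that $P_{J,1},\dots,P_{J,n+1}$ are in general position on $V$.  The index-$\kappa$ hypothesis is exactly what lets the first $\kappa$ be left untouched.  The combinatorial factor then comes from the trivial bound $\|P_{J,t}(x(\alpha))\|_v\le h_v(\alpha)\max_{j\le m-n+t}\|Q_{i_j}(x(\alpha))\|_v$ combined with the monotonicity of the ordered $\|Q_{i_j}(x(\alpha))\|_v$; see (\ref{ctm1})--(\ref{a3}).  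It is the $P_{J,t}$'s, not any $n+1$ of the original $Q_j$'s, that feed into Lemma~\ref{L4s} and then into Theorem~A.

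A minor difference: the paper does not pigeonhole to freeze $J(v,\alpha)$.  It lets $J(v,\alpha)$ range over the finite set $\mathcal J$, expresses every basis $\{\phi_\ell^{J}\}$ through a single fixed basis $\{\Phi_\ell\}$ via linear forms $L_\ell^{J}$, and invokes Theorem~A once, its $\max_K$ absorbing all $J$ simultaneously.  Your pigeonhole would also work, but it is not where the difficulty lies.
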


\begin{remark}\label{remark}  (i) By replacing $Q_j$ by $Q_j^{\frac{d}{d_j}},$ where $d=\text{lcm}\{d_1,\dots,d_q\},$ in Theorem \ref{Schmidt}, we may assume that $D_1,\dots,D_q$ have the same degree $d.$

(ii) By replacing $Q_j= \sum_{I\in \mathcal T_{d}}a_{j, I}x^{I}$ by $Q'_j = \sum\limits_{I \in \Cal T_{d}} \dfrac{a_{jI}}{a_{jI_{j}}}x^I$, in Theorem \ref{Schmidt}, we may assume that $Q_j\in\mathcal R_{A, \mathcal Q}[x_0,\dots,x_M].$
\end{remark}
Our result is a extension the Schmidt's Subspace Theorem due to Son-Tan-Thin \cite{STT} and Quang \cite{Q2}.

\section{Some Lemmas}
We write
\begin{eqnarray*}
Q_j = \sum\limits_{I \in \Cal T_{d}} a_{jI}x^I, \quad (j = 1,\dots,q).
\end{eqnarray*}
Let $A\subset\Lambda$ be an infinity coherent subset with respect to $\Cal Q.$  For each $j\in\{1,\dots,q\},$ we fix an index $I_{j}\in\Cal T_{d}$ such that $a_{j, I_{j}}\not\equiv  0$ (this means that
$a_{j, I_{j}}(\alpha)\ne 0$ for all but finitely many $\alpha\in A$), then $\dfrac{a_{jI}}{a_{jI_{j}}}$ defines an element of $\mathcal R_{A}^{0}$ for any
 $I \in \mathcal T_{d}$.
 Set \begin{eqnarray*}
Q'_j = \sum\limits_{I \in \Cal T_{d}} \dfrac{a_{jI}}{a_{jI_{j}}}x^I, \quad (j = 1,\dots,q).
\end{eqnarray*}
\noindent Let $ t=(\dots, t_{jI},\dots)$ be a family of variables.
 Set
\begin{eqnarray*}
\widetilde{Q_j} = \sum\limits_{I \in \Cal T_{d}} t_{jI}x^I\in k[t,x].
\end{eqnarray*}
We have
$\widetilde{Q_j}(\dots,\dfrac{a_{jI}}{a_{jI_{j}}}(\alpha),\dots,x_0,\dots, x_M)=Q'_j(\alpha)(x_0,\dots,x_M)$ for all $\alpha\in A$ outside a finite subset.

Assume that the ideal $\Cal I(V)$ of $V$ is generated by homogeneous polynomials $\mathcal P_1,\dots,\mathcal P_s.$ Since $\Cal Q$ is in $m$-subgeneral position on $V$, for each $J:=\{j_0,\dots,j_m\}\subset\{1,\dots,q\},$ there is a subset  $A_J\subset A$ with finite complement such that for all $\alpha\in A_J,$ the homogeneous polynomials
 $$ \mathcal P_1,\dots,\mathcal P_s,Q'_{j_0}(\alpha),\dots,Q'_{j_m}(\alpha)\in k[x_0,\dots,x_M]$$
have no common non-trivial solutions in $\overline{k}^{M+1}.$
Denote by $$_{k[t]}(\mathcal P_1,\dots, \mathcal P_s, \widetilde{Q}_{j_0},\dots,\widetilde{Q}_{j_m})$$ the ideal in the ring of  polynomials in $x_0,\dots, x_M$ with coefficients in $k[t]$ generated by
$\mathcal P_1,\dots,\mathcal P_s,\widetilde{Q}_{j_0},\dots,\widetilde{Q}_{j_m}.$ A polynomial $\widetilde R$ in $k[t]$  is called an
 {\it inertia form} of the polynomials $$ \mathcal P_1,\dots, \mathcal P_s, \widetilde{Q}_{j_0},\dots,\widetilde{Q}_{j_m}$$ if it has the following property:
\begin{eqnarray*}
x_i^N\cdot \widetilde {R}\in \;_{k[t]}(\mathcal P_1,\dots, \mathcal P_s, \widetilde {Q}_{j_0},\dots,\widetilde{Q}_{j_m})
\end{eqnarray*} for $i=0,\dots,M$ and for some non-negative integer $N$  (see e.g.  \cite{Z}). It follows from the definition that the set $\Cal I$ of inertia forms of polynomials $\mathcal P_1,\dots,$ $\mathcal P_s,$ $\widetilde{Q}_{j_0},$ $\dots,$ $\widetilde{Q}_{j_m}$ is an ideal in $k[t].$\par

It is well known that $(s+m+1)$ homogeneous polynomials
$$ \mathcal P_i(x_0,\dots,x_M), \widetilde{Q_j}(\dots,t_{jI},\dots,x_0,\dots, x_M), i\in\{1,\dots,s\},$$ $j\in J$
have no common non-trivial solutions in $x_0,\dots,x_M$
for special values $t_{jI}^0$ of $t_{jI}$ if and only if there exists an inertia form $\widetilde {R_J}^{t_{jI}^0}$  such that
$\widetilde R_J^{t_{jI}^0}(\dots,t_{jI}^0,\dots)\ne 0$ (see e.g. \cite[page 254]{Z}). For  each $\alpha\in A_J,$ choose  $\widetilde {R_J}^{\alpha}\in\Cal I$ with respect to the special values $t_{jI}^\alpha:= \dfrac{a_{jI}}{a_{jI_{j}}}(\alpha).$ Set $R_J^{\alpha}:=\widetilde R_J^{\alpha}(\dots,\dfrac{a_{jI}}{a_{jI_{j}}},\dots).$ Then $R_J^{\alpha}$  is a special presentative of an element  $\mathcal R_{A,\Cal Q}.$
By construction, we have
\begin{eqnarray}\label{t1}
R_J^{\alpha}(\alpha)\ne 0 \;\text{ for all} \;\alpha\in A_J \;(\text{and hence, for all, but finitely many,}\;\alpha\in A).
\end{eqnarray}
Since $k[t]$ is Noetherian,  $\Cal I$ is generated by finite polynomials $\widetilde{R}_{J_1},\dots,\widetilde{R}_{J_p}.$ For each $\alpha,$ we write
$\widetilde {R_J}^{\alpha}=\sum_{\ell=1}^p \widetilde{G}^{\alpha}_\ell\widetilde {R}_{J_\ell},$  $\widetilde G^{\alpha}_{\ell}\in k[t].$ We have that $G^\alpha_{\ell}:=\widetilde G^{\alpha}_\ell(\dots,\dfrac{a_{jI}}{a_{jI_{j}}},\dots),$ and $R_{J\ell}:=\widetilde {R}_{J\ell}(\dots,\dfrac{a_{jI}}{a_{jI_{j}}},\dots)$ are special representatives of elements in $\mathcal R_{A,\Cal Q}.$ It is clear that $R_J^{\alpha}=\sum_{\ell=1}^p G^{\alpha}_{\ell} R_{J\ell}.$
Hence, by (\ref{t1}), we have
\begin{eqnarray*}
0\ne R_J^{\alpha}(\alpha)=\sum_{\ell=1}^pG^\alpha_{\ell} (\alpha) R_{J_\ell}(\alpha)
\end{eqnarray*}
for all, but finitely many $\alpha\in A.$
Therefore, there is $\ell_0\in\{1,\dots,p\}$ such that
\begin{eqnarray}\label{t2}
R_{J_{\ell_0}}(\alpha)\ne 0 \quad \text{for all, but finitely many}\quad \alpha\in A.
\end{eqnarray}
Furthermore, by the definition of the inertia forms, there are a non-negative integer $N,$ polynomials
 $b_{i\ell}\in\mathcal R_{A, \Cal Q}[x_0, \dots, x_M]$ with $\deg b_{ij_k}=N-d$ and $\deg b_{i\ell}=N-\deg P_\ell$ such that
\begin{eqnarray}\label{ct17s}
R_{J_{\ell_0}}\cdot x_i^{N}=\sum_{k=0}^{m}b_{i j_k}\cdot Q'_{j_k}+\sum_{\ell=1}^{s}b_{i\ell}\cdot \mathcal P_\ell,
\end{eqnarray}
for all $0\le i\le M$.

Let ${\bf x}: \Lambda \to V\subset P^{M}(k)$ be a map. A map $(C, a) \in \mathcal R_{A}^{0}$ is called small with respect to ${\bf x}$ if and only if
$$ h(a(\alpha))=o(h({\bf x}(\alpha))), $$
which means that, for every $\varepsilon >0$, there exists a subset $C_{\varepsilon}\subset C$ with finite complement such that $h(a(\alpha))\le \varepsilon h({\bf x}(\alpha))$ for all $\alpha \in C_{\varepsilon}.$ We denote by $\mathcal K_{\bf x}$ the set of all such small maps. Then, $\mathcal K_{\bf x}$ is subring of $ \mathcal R_{A}^{0}$. It is not an entire ring, however, if $(C, a)\in \mathcal K_x$ and $a(\alpha)\ne 0$ for all but finitely $\alpha \in C$, then we have $(C\setminus \{\alpha: a(\alpha)=0\}, \dfrac{1}{a})\in \mathcal K_{\bf x}.$
Denote by $\mathcal C_{\bf x}$ the set of all positive functions $g$ defined over $\Lambda$ outside a finite subset of $\Lambda$ such that
$$ \log^{+}(g(\alpha)) =o(h({\bf x}(\alpha))).$$
Then $\mathcal C_{\bf x}$ is a ring. Moreover, if $(C, a)\in \mathcal K_{\bf x}$, then for every $v\in M(k)$, the function $||a||_v: C\to \mathbb R^{+}$ given by $\alpha \mapsto ||a(\alpha)||_v$ belongs to $\mathcal C_{\bf x}$. Furthermore, if $(C, a)\in \mathcal K_{\bf x}$ and $a(\alpha)\ne 0$ for all but finitely $\alpha \in C$, the function $g: \{\alpha|a(\alpha) \ne 0\}\mapsto \dfrac{1}{||a(\alpha)||_v}$ also belongs to $\mathcal C_{\bf x}.$

From (\ref{t2}) and (\ref{ct17s}), similarly to Lemma 2.2 in \cite{G}, under the assumption of Theorem \ref{Schmidt} and Remark \ref{remark}, we have the following result.
\begin{lemma}\label{lem21s} Let $A\subset \Lambda$ be coherent with respect to $\Cal Q.$ Suppose that
 $\mathcal Q$ is in $m-$subgeneral position with respective to $V.$ Then for each $J\subset\{1,\dots,q\}$ with $|J|=m+1,$ there are functions $l_{1, v}, l_{2, v}\in\mathcal C_{\bf x}$ such that
$$ l_{2, v}(\alpha)||x(\alpha)||_v^{d}\le \max_{j\in J}||Q_j(\alpha)(x(\alpha))||_v\le l_{1, v}(\alpha)||x(\alpha)||_v^{d}$$
for all $\alpha \in A$ ouside finite subset and all $v\in S.$
\end{lemma}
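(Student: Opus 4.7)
The plan is to derive both inequalities from the inertia-form identity (\ref{ct17s}), exploiting that $x(\alpha)\in V$ annihilates the $\mathcal P_\ell$-terms, together with the smallness hypothesis $h(D_j(\alpha))=o(h({\bf x}(\alpha)))$ inherited from Theorem~\ref{Schmidt} via Remark~\ref{remark}. The structure should essentially mirror Lemma~2.2 of \cite{G}, adapted to the $V$-projective setting. The upper bound is the easy side: I would simply expand $Q_j(\alpha)(x(\alpha))=\sum_I a_{jI}(\alpha)\,x(\alpha)^I$ on a special representative and apply property $(iii)$ of $\|\cdot\|_v$ to obtain
\[
\|Q_j(\alpha)(x(\alpha))\|_v \le B_v^{n_v M_d}\,\max_{I}\|a_{jI}(\alpha)\|_v\cdot \|x(\alpha)\|_v^d,
\]
and then take $l_{1,v}(\alpha):=B_v^{n_v M_d}\max_{j\in J,\,I}\|a_{jI}(\alpha)\|_v$. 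Since each $a_{jI}\in\mathcal K_{\bf x}$ by the smallness hypothesis, one has $l_{1,v}\in\mathcal C_{\bf x}$ immediately.

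For the lower bound, I would fix $J=\{j_0,\dots,j_m\}$, invoke (\ref{ct17s}) with the index $\ell_0$ supplied by (\ref{t2}), and evaluate at $\alpha$ on $x(\alpha)\in V$. Since the $\mathcal P_\ell(x(\alpha))$ vanish, what remains is
\[
R_{J_{\ell_0}}(\alpha)\cdot x_i(\alpha)^N = \sum_{k=0}^{m} b_{ij_k}(\alpha)(x(\alpha))\cdot Q_{j_k}(\alpha)(x(\alpha)),\qquad 0\le i\le M.
\]
Each $b_{ij_k}$ is homogeneous of degree $N-d$ with coefficients in $\mathcal R_{A,\mathcal Q}\subset\mathcal K_{\bf x}$, so a second application of property $(iii)$ gives $\|b_{ij_k}(\alpha)(x(\alpha))\|_v \le c_{ij_k,v}(\alpha)\,\|x(\alpha)\|_v^{N-d}$ with $c_{ij_k,v}\in\mathcal C_{\bf x}$. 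Taking $\|\cdot\|_v$ of the identity, maximizing over $i$, and dividing by $\|x(\alpha)\|_v^{N-d}$, I would arrive at
\[
\|R_{J_{\ell_0}}(\alpha)\|_v\cdot\|x(\alpha)\|_v^d \le C_v(\alpha)\cdot\max_{j\in J}\|Q_j(\alpha)(x(\alpha))\|_v
\]
for a suitable $C_v\in\mathcal C_{\bf x}$, and set $l_{2,v}(\alpha):=\|R_{J_{\ell_0}}(\alpha)\|_v/C_v(\alpha)$.

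The only nontrivial point, and hence the main obstacle, is verifying $l_{2,v}\in\mathcal C_{\bf x}$, which reduces to showing $1/\|R_{J_{\ell_0}}(\alpha)\|_v\in\mathcal C_{\bf x}$. By (\ref{t2}), $R_{J_{\ell_0}}(\alpha)\ne 0$ for all but finitely many $\alpha\in A$; further, $R_{J_{\ell_0}}$ is the specialization of a polynomial $\widetilde R_{J_{\ell_0}}\in k[t]$ at the ratios $a_{jI}/a_{jI_j}$, each of which belongs to $\mathcal K_{\bf x}$ by the smallness hypothesis (and Remark~\ref{remark}(ii)). Hence $R_{J_{\ell_0}}\in\mathcal K_{\bf x}$, and being almost-everywhere nonzero its reciprocal also lies in $\mathcal K_{\bf x}$, so $1/\|R_{J_{\ell_0}}(\cdot)\|_v\in\mathcal C_{\bf x}$. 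Propagating the small-function property through the polynomial and rational-function operations in $\mathcal R_{A,\mathcal Q}$ is the only real bookkeeping the argument will require.
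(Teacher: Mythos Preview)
Your proposal is correct and is precisely the argument the paper intends: the paper does not spell out a proof but simply states that the lemma follows ``from (\ref{t2}) and (\ref{ct17s}), similarly to Lemma 2.2 in \cite{G}'', which is exactly the inertia-form identity you evaluate on $V$ for the lower bound, combined with the trivial monomial expansion for the upper bound. The only detail worth making explicit when you write it up is that $1/C_v\in\mathcal C_{\bf x}$ follows because at least one coefficient of some $b_{ij_k}$ is a nonzero element of $\mathcal R_{A,\mathcal Q}$ (otherwise $R_{J_{\ell_0}}\cdot x_i^N\in\mathcal I_{A,\mathcal Q}(V)$ for all $i$, forcing $R_{J_{\ell_0}}=0$), so by coherence it is nonzero for all but finitely many $\alpha$ and its reciprocal norm dominates $1/C_v$.
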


For each positive integer $\ell$ and for each vector sub-space $W$ in $k[x_0, \dots, x_M]$ (or in $\mathcal R_{A, \Cal Q}[x_0, \dots, x_M]$), we denote by $W_\ell$ the vector space consisting of all homogeneous polynomials in $W$ of degree $\ell$ (and  of the zero polynomial).

By the usual theory of Hilbert polynomials, for $N>>0$, we have
\begin{eqnarray*} H_V(N)&:=&\dim_k \dfrac{k[x_0, \dots, x_M]_N}{\mathcal I(V)_N}\\
&=&\dim_{\overline {k}} \dfrac{\overline {k}[x_0, \dots, x_M]_N}{\mathcal I(V(\overline k))_N}
=\deg V. \dfrac{N^n}{n!}+O(N^{n-1}).
\end{eqnarray*}

Using the method in the proof of \cite[Lemma 3.1 ]{Q2} and \cite[Lemma 3.3]{XC}, we get the result as follow:
\begin{lemma}\label{lm1}
Let $Q_1, \dots, Q_{m+1}$ be homogeneous polynomials in $\mathcal R_{\Lambda}^{0}[x_0, \dots, x_M]$ of the same degree $d\ge 1,$
 in $m-$subgeneral position with index $\kappa$  in $V.$ Then for an infinitely subset $A\subset \Lambda$ which is coherent with respect to $\{Q_1,\dots, Q_{m+1}\},$
 there exists $n$ homogeneous polynomials $P_1=Q_1,\dots,P_{\kappa}=Q_{\kappa},P_{\kappa+1},\dots, P_{n+1}$ in $\mathcal R_{\Lambda}^{0}[x_0, \dots, x_M]$ of the form
$$ P_t=\sum_{j=\kappa+1}^{m-n+t}c_{tj}Q_j, c_{tj}\in k,\quad t=\kappa+1, \dots, n+1, $$ such that the system of equations
$$ P_{t} (\alpha)(x_0, \dots, x_M)=0, \hspace{1cm} 1\le t \le n+1$$
has no solution $(x_0, \dots, x_M)$ satisfying $(x_0:\cdots: x_M) \in V(\overline k),$ for all  $\alpha\in\Lambda$ outside a finite subset of $\Lambda.$
\end{lemma}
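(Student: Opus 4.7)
The plan is to construct $P_{\kappa+1},\dots,P_{n+1}$ inductively, maintaining at step $t$ the invariant that
$$\dim\Bigl(V\cap\{P_1(\alpha)=0\}\cap\cdots\cap\{P_t(\alpha)=0\}\Bigr)\leq n-t$$
for all but finitely many $\alpha\in A$. The base case $t=\kappa$ is immediate from the index $\kappa$ hypothesis in Definition \ref{new2} applied to $I=\{1,\dots,\kappa\}$, which forces the codimension of $V\cap D_1(\alpha)\cap\cdots\cap D_\kappa(\alpha)$ to be at least $\kappa$.

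For the inductive step, suppose $P_1,\dots,P_t$ with $\kappa\leq t\leq n$ have already been chosen, and set $Z_t(\alpha):=\{P_1(\alpha)=0\}\cap\cdots\cap\{P_t(\alpha)=0\}$. The key observation is that on any irreducible component $W$ of $V\cap Z_t(\alpha)$ of maximal dimension $n-t$, at least one of $Q_{\kappa+1}(\alpha),\dots,Q_{m-n+t+1}(\alpha)$ must not vanish identically. Indeed, if all of them did vanish on $W$, then combined with $Q_1(\alpha),\dots,Q_\kappa(\alpha)$ (which vanish on $W\subset Z_t(\alpha)$), we would obtain
$$W\subset V\cap\bigcap_{j=1}^{m-n+t+1}D_j(\alpha),$$
but the $m$-subgeneral position hypothesis of Definition \ref{new1} forces the right-hand side to have dimension at most $m-(m-n+t+1)=n-t-1$, contradicting $\dim W=n-t$.

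To convert this pointwise statement into a uniform one I would pass to the field $F:=\mathcal R_{A,\mathcal Q}$ and its algebraic closure $\overline F$, view all polynomials as elements of $\overline F[x_0,\dots,x_M]$, and enumerate the finitely many irreducible components $\widetilde W_1,\dots,\widetilde W_s$ of $V\cap Z_t$ over $\overline F$ of dimension $n-t$. By the observation above (applied at generic $\alpha$) and specialization, for each $i$ there is some $j_i$ with $\kappa+1\leq j_i\leq m-n+t+1$ such that $Q_{j_i}$ does not vanish on $\widetilde W_i$. Consequently, the set
$$L_i:=\Bigl\{(c_j)_{j=\kappa+1}^{m-n+t+1}\in\overline F^{\,m-n+t+1-\kappa}\ :\ \textstyle\sum_j c_j Q_j\equiv 0\ \text{on}\ \widetilde W_i\Bigr\}$$
is a proper $\overline F$-linear subspace, whence $L_i\cap k^{m-n+t+1-\kappa}$ is a proper $k$-linear subspace (otherwise its $\overline F$-span, which equals $\overline F^{m-n+t+1-\kappa}$, would lie in $L_i$). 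Since $k$ is infinite it is not covered by the finite union $\bigcup_i(L_i\cap k^{m-n+t+1-\kappa})$, so I can choose $(c_{t+1,j})\in k^{m-n+t+1-\kappa}$ outside every $L_i$ and define $P_{t+1}:=\sum_{j=\kappa+1}^{m-n+t+1}c_{t+1,j}Q_j$. By specialization $P_{t+1}(\alpha)$ does not vanish on any $(n-t)$-dimensional irreducible component of $V\cap Z_t(\alpha)$ for all but finitely many $\alpha$, so $\dim(V\cap Z_{t+1}(\alpha))\leq n-t-1$, closing the induction; iterating until $t=n+1$ yields the empty intersection claimed in the lemma.

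The principal technical obstacle is the specialization argument: one must verify that every irreducible component of $V\cap Z_t(\alpha)$ of dimension $n-t$ arises as the fibre of some $\widetilde W_i$ for all but finitely many $\alpha\in A$, so that non-vanishing of $P_{t+1}$ on the generic components descends to non-vanishing on the corresponding fibre components. This is the point where the coherence of $A$ and the standard upper-semicontinuity machinery for fibres of projective morphisms come in, essentially along the lines of \cite[Lemma 3.1]{Q2} and \cite[Lemma 3.3]{XC}, and I expect adapting those arguments to the present setting with index $\kappa$ to be the most delicate part of the proof.
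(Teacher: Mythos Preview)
Your inductive dimension-drop argument --- at each step, if every $Q_j$ with $\kappa+1\le j\le m-n+t+1$ vanished on a top-dimensional component $W$, then together with $Q_1,\dots,Q_\kappa$ one would have $m-n+t+1$ of the original $Q$'s vanishing on $W$, contradicting $m$-strict subgeneral position --- is exactly the mechanism the paper uses. The difference is in how the construction is made uniform in $\alpha$.

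You work over $\overline{\mathcal R_{A,\mathcal Q}}$ and maintain the invariant $\dim(V\cap Z_t(\alpha))\le n-t$ for all but finitely many $\alpha$ at every stage of the induction, which forces you into the specialization of irreducible components that you rightly flag as the delicate point. The paper bypasses this entirely: it fixes a single $\alpha_0\in A$, runs the whole induction over $\overline k$ at that one point (so the components are ordinary projective varieties and the avoidance-of-proper-subspaces argument is elementary), and obtains $\bigcap_{t=1}^{n+1}\{P_t(\alpha_0)=0\}\cap V(\overline k)=\emptyset$ only for $\alpha_0$. Uniformity is then recovered in one stroke at the end via an inertia form $R\in k[t]$ of $\mathcal P_1,\dots,\mathcal P_s,\widetilde P_1,\dots,\widetilde P_{n+1}$: emptiness at $\alpha_0$ gives $R(\alpha_0)\ne 0$, coherence of $A$ forces $R(\alpha)\ne 0$ for all but finitely many $\alpha$, and the defining property of inertia forms turns this back into emptiness of the intersection at those $\alpha$. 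Thus the paper never needs to track how fibre components vary with $\alpha$; your approach would work but requires the heavier specialization machinery, whereas the single-point-plus-inertia-form device makes the lemma essentially elementary once the resultant/inertia-form formalism of Section~2 is in place.
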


\begin{proof}
We assume that $Q_i\; (1\le i\le m+1)\;$ has the following form:
$$ Q_i=\sum_{I\in\mathcal T_d}a_{i,I}x^{I}.$$
By the definition of $m$-strictly subgeneral position, there exists an infinite subset $A'$ of $A$ with finite complement such that, for all $\alpha\in A',$ the system of equations
$$ Q_i(\alpha)(x_0,\dots,x_M)=0,\;\,\, 1\le 1\le m+1,$$
has only the trivial solution $(0,\dots,0)$ in $\overline k.$ We note that $A'$ is coherent with respect to $\{Q_i\}_{i=1}^{m+1}.$ Replacing $A$ by $A'$ if necessary, we may assume that $A'=A.$ We fix $\alpha_0\in A.$ For each homogeneous polynomial $Q\in k[x_0,\dots, x_M],$ we will denote
$$D^{*}=\{(x_0:\dots:x_M)\in\mathbb P^M(\overline k): Q(x_0,\dots,x_M)=0\}.$$ Setting $P_1=Q_1,\dots,P_{\kappa}=Q_{\kappa},$ we build $P_{\kappa+1},\dots,Q_{n+1}$ as follows. First we see that
$$ \dim \left(\cap_{i=1}^{t}D_i^{*}(\alpha_0)\cap V(\overline k)\right)\le m-t\; \text{for}\; t=m-n+\kappa+1,\dots,m+1,$$
where $\dim \emptyset=-\infty$ and for any $1\le l\le \kappa,$ we have
\begin{eqnarray*}
\dim\Big(\cap_{j=1}^{l}D_j^{*}(\alpha_0)\cap V(\overline k)\Big)=n-l.
\end{eqnarray*}
Indeed, it is obvious $\dim \Big(\cap_{j=1}^{\kappa}D_j^{*}(\alpha_0)\cap V(\overline k)\Big)\ge n-l,$ and by the definition of $m$-strictly subgeneral position with index $\kappa,$ we get $\dim \Big(\cap_{j=1}^{l}D_j^{*}(\alpha_0)\cap V(\overline k)\Big)\le n-l.$ In particular, we have
\begin{eqnarray*}
\dim\Big(\cap_{j=1}^{\kappa}D_j^{*}(\alpha_0)\cap V(\overline k)\Big)=n-\kappa.
\end{eqnarray*}

{\bf Step 1.} We will construct $P_{\kappa+1}$ as follows. For each irreducible component $\Gamma$ of dimension $n-\kappa$ of $\cap_{j=1}^{\kappa}D_j^{*}(\alpha_0)\cap V(\overline k),$ we put
\begin{eqnarray*}
V_{1\Gamma}=\Big\{c=(c_{\kappa+1},\dots,c_{m-n+\kappa+1})\in k^{m-n+1},\Gamma\subset D_c^{*}(\alpha_0),\; \text{where}\; Q_c=\sum_{j=\kappa+1}^{m-n+\kappa+1}c_jQ_j\Big\}.
\end{eqnarray*}
Then $V_{1\Gamma}$ is a subspace of $k^{m-n+1}.$ Since
$$ \dim \Big(\cap_{i=1}^{m-n+\kappa+1}D_i^{*}(\alpha_0)\cap V(\overline k)\Big)\le n-\kappa-1,$$
there exists $i\in \{\kappa+1,\dots,m-n+\kappa+1\}$ such that $\Gamma\not\subset D_i^{*}(\alpha_0).$ Since the set of irreducible components of dimension $n-\kappa$ of $\cap_{j=1}^{\kappa}D_j^{*}(\alpha_0)\cap V(\overline k)$ is at most countable, we have
$$ k^{m-n+\kappa}\setminus \cup_{\Gamma}V_{1\Gamma}\ne \emptyset.$$ Hence there exists $(c_{1(\kappa+1)},\dots,c_{1(m-n+\kappa+1)})\in k^{m-n+1}$ such that $\Gamma\not\subset D_{\kappa+1}^{*}(\alpha_0)$ for all irreducible components of dimension $n-\kappa$ of $\cap_{j=1}^{\kappa}D_j^{*}(\alpha_0)\cap V(\overline k),$ where $$P_{\kappa+1}=\sum_{j=\kappa+1}^{m-n+\kappa+1}c_{1(j)}Q_j.$$ This clearly implies that
\begin{eqnarray}\label{m1}
 \dim \Big(\cap_{i=1}^{\kappa+1}D_i^{*}(\alpha)\cap V(\overline k)\Big)\le n-\kappa-1.
\end{eqnarray}
Certainly, we have
\begin{eqnarray}\label{m2}
\dim \Big(\cap_{i=1}^{\kappa+1}D_i^{*}(\alpha)\cap V(\overline k)\Big)\ge n-\kappa-1.
\end{eqnarray}
From (\ref{m1}) and (\ref{m2}), we get
\begin{eqnarray}
\dim \Big(\cap_{i=1}^{\kappa+1}D_i^{*}(\alpha)\cap V(\overline k)\Big)=n-\kappa-1.
\end{eqnarray}

{\bf Step 2.} We will construct $P_{\kappa+2}$ as follows. For each irreducible component $\Gamma'$ of dimension $n-\kappa-1$ of $\cap_{j=1}^{\kappa+1}D_j^{*}(\alpha_0)\cap V(\overline k),$ we put
\begin{eqnarray*}
V_{1\Gamma'}=\Big\{c=(c_{\kappa+1},\dots,c_{m-n+\kappa+2})\in k^{m-n+2},\Gamma'\subset D_c^{*}(\alpha_0),\; \text{where}\; Q_c=\sum_{j=\kappa+1}^{m-n+\kappa+2}c_jQ_j\Big\}.
\end{eqnarray*}
Then $V_{1\Gamma'}$ is a subspace of $k^{m-n+2}.$ Since
$$ \dim \Big(\cap_{i=1}^{m-n+\kappa+2}D_i^{*}(\alpha_0)\cap V(\overline k)\Big)\le n-\kappa-2,$$
there exists $i\in \{\kappa+1,\dots,m-n+\kappa+2\}$ such that $\Gamma\not\subset D_i^{*}(\alpha_0).$ Since the set of irreducible components of dimension $n-\kappa-1$ of $\cap_{j=1}^{\kappa+1}D_j^{*}(\alpha_0)\cap V(\overline k)$ is at most countable, we have
$$ k^{m-n+\kappa+1}\setminus \cup_{\Gamma'}V_{1\Gamma'}\ne \emptyset.$$ Hence there exists $(c_{2(\kappa+1)},\dots,c_{2(m-n+\kappa+2)})\in k^{m-n+2}$ such that $\Gamma'\not\subset D_{\kappa+2}^{*}(\alpha_0)$ for all irreducible components of dimension $n-\kappa-1$ of $\cap_{j=1}^{\kappa}D_j^{*}(\alpha_0)\cap V(\overline k),$ where $$P_{\kappa+2}=\sum_{j=\kappa+1}^{m-n+\kappa+2}c_{2(j)}Q_j.$$ This clearly implies that
$$ \dim \Big(\cap_{i=1}^{\kappa+2}D_i^{*}(\alpha_0)\cap V(\overline k)\Big)\le n-\kappa-2$$
and we also have $$ \dim \Big(\cap_{i=1}^{\kappa+2}D_i^{*}(\alpha_0)\cap V(\overline k)\Big)= n-\kappa-2.$$\par

Repeat again the above steps, after $(n+1-\kappa)$-th step, we get the hypersurface $P_{\kappa+1},\dots, P_{n+1}$ satisfying
$$ \dim \Big(\cap_{i=1}^{t}D_i^{*}(\alpha_0)\cap V(\overline k)\Big)\le n-t$$
for all $t=\kappa+1,\dots,n+1.$ In particular,
\begin{eqnarray}\label{ct1}
\cap_{i=1}^{n+1}D_i^{*}(\alpha_0)\cap V(\overline k)=\emptyset.
\end{eqnarray}
We denote $R$ by the inertia form of $\mathcal P_1,\dots,\mathcal P_s,\widetilde{P}_{1},\dots,\widetilde{P}_{n+1}.$
As in (\ref{t2}) and (\ref{ct1}), we see that $R(\alpha_0)\ne 0.$  By the property coherent of $A,$ it implies that $R(\alpha)\ne 0$ for all $\alpha\in A$ outside a finite set. By property of inertia form, we have
 \begin{eqnarray}\label{ct2}
\cap_{i=1}^{n+1}D_i^{*}(\alpha)\cap V(\overline k)=\emptyset
\end{eqnarray}
for all $\alpha\in A$ outside a finite set.
\end{proof}

%Remark that in the proof of Theorem \ref{Schmidt}, we can apply Lemma \ref{lem21s} for new moving homogeneous polynomials $P_1,\dots,P_{n+1}$ which are constructed as above.\par

Note that $c_{tj}$ does not depend on $\Lambda$ for all $ t=2,\dots, n+1$ and $ j=2, \dots, m+1,$  then $P_t[x_0, \dots, x_M]\in \mathcal R_{A, \Cal Q }[x_0, \dots, x_M] $ for $t=1, \dots, n+1.$ We denote $\mathcal I_{A, \Cal Q}(V)$  the ideal  in $\mathcal R_{A, \Cal Q }[x_0, \dots, x_M]$ generated by the elements in $\mathcal I(V).$  It is clear that $\mathcal I_{A, \Cal Q}(V)$ is also the sub-vector space of $\mathcal R_{A, \Cal Q}[x_0, \dots, x_M]$ generated by $\mathcal I (V ).$\par

We use the lexicographic order in $\N_0^n$ and for $I=(i_1,\dots,i_n),$ set $\Vert I\Vert :=i_1+\cdots+i_n.$
\begin{definition}\label{dn1}
For each
 $I=(i_1,\cdots, i_n)\in \N_0^n$ and $N\in\N_0$ with $N\geq d\Vert I\Vert,$ denote by $\Cal L_N^I$
the set of all $\gamma\in\mathcal R_{A, \Cal Q }[x_0,\dots,x_M]_{N-d\Vert I\Vert}$ such that
\begin{eqnarray*}
P_1^{i_1}\cdots P_n^{i_n}\gamma-
\sum_{E=(e_1,\dots,e_n)>I}P_1^{e_1}\cdots P_n^{e_n}\gamma_E\in \mathcal I_{A, \Cal Q }(V)_N.
\end{eqnarray*}
for some $\gamma_E\in \mathcal R_{A, \Cal Q} [x_0,\dots,x_M]_{N-d\Vert E\Vert}$.
\end{definition}

Denote by $\Cal L^I$ the homogeneous ideal in $\mathcal R_{A, \Cal Q } [x_0,\dots,x_M]$ generated by $\cup_{N\geq d\Vert I\Vert}\Cal L_N^I.$\par

\begin{remark}\label{r1}
\noindent i) $\Cal L_N^I$ is a $\mathcal R_{A, \Cal Q }$-vector sub-space of $\mathcal R_{A, \Cal Q }[x_0,\dots,x_M]_{N-d\Vert I\Vert},$ and
 $$(\Cal I(V), P_1,\dots,P_n)_{N-d\Vert I\Vert}\subset \Cal L_N^I,$$   where $(\Cal I(V), P_1,\dots,P_n)$ is
 the ideal in $\mathcal R_{A, \Cal Q }[x_0,\dots,x_M]$ generated by
 $\Cal I(V)\cup\{P_1,\dots,P_n\}.$\par

ii) For any $\gamma\in \Cal L_N^I$ and $P\in \mathcal R_{A, \Cal Q }[x_0,\dots,x_M]_k,$ we have $\gamma\cdot P\in\Cal L_{N+k}^I$\par

iii) $\Cal L^I\cap \mathcal R_{A, \Cal Q }[x_0,\dots,x_M]_{N-d\Vert I\Vert}=\Cal L_N^I.$\par

iv) $\frac{\mathcal R_{A, \Cal Q }[x_0, \dots, x_M]}{\Cal L^I}$
is a graded module over the graded ring $\mathcal R_{A, \Cal Q }[x_0,\dots,x_M].$

v) If $I_1-I_2=(i_{1,1}-i_{2,1}, \dots, i_{1,n}-i_{2, n})\in N_0^{n},$ then $\Cal{L}_{N}^{I_{2}}\subset \Cal{L}_{N+d||I_{1}||-d||I||_2}^{I_{1}}.$ Hence
$\Cal L^{I_2}\subset \Cal L^{I_1}.$
\end{remark}

Using Definition \ref{dn1}, by arguments as \cite{STT}, we get the following result.
\begin{lemma}\label{L4s}\cite{STT}
For all $N>>0$ divisible $d,$ there are homogeneous polynomials $\phi_1,$ $\dots,$ $\phi_{H_V(N)}$  in $ \mathcal R_{A, \Cal Q}[x_0,\dots,x_M]_N$ such that they form a basis of the $\mathcal R_{A, \Cal Q}$-vector space
$\frac{\mathcal R_{A, \Cal Q}[x_0,\dots,x_M]_N}{\mathcal I_{A, \Cal Q}(V)_N},$  and
 \begin{eqnarray*}
\prod_{j=1}^{H_V(N)}\phi_j-\big(P_{1}\cdots P_{n}\big)^{\frac{\deg V\cdot N^{n+1}}{d\cdot (n+1)!}-u(N)}\cdot P\in\mathcal I_{A, \Cal Q}(V)_N,
\end{eqnarray*}
where  $u(N)$ is a function  satisfying $u(N)\leq O(N^n)$,  $P \in \mathcal R_{A, \Cal Q}[x_0,\dots,x_M]$ is a homogeneous polynomials of degree
$$N\cdot H_V(N)-\frac{n\cdot\deg V\cdot N^{n+1}}{(n+1)!}+u(N)=\frac{\deg V\cdot N^{n+1}}{(n+1)!}+O(N^n).$$
\end{lemma}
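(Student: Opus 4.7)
The plan is to construct the basis $\phi_1,\dots,\phi_{H_V(N)}$ via the filtration $\{\Cal L_N^I\}$ of $\mathcal R_{A,\Cal Q}[x_0,\dots,x_M]_N/\mathcal I_{A,\Cal Q}(V)_N$ indexed in decreasing lexicographic order by tuples $I=(i_1,\dots,i_n)\in\N_0^n$ with $d\|I\|\leq N$, following the scheme in \cite{STT} but now over the coefficient ring $\mathcal R_{A,\Cal Q}$ rather than a field. For each $I$, I would pass to the successive quotient
$$W_N^I := \Cal L_N^I\bigg/\Big(\sum_{I'>I}\Cal L_N^{I'}+(\mathcal I_{A,\Cal Q}(V),P_1,\dots,P_n)_{N-d\|I\|}\cap\Cal L_N^I\Big),$$
and choose representatives $\{\psi_{I,s}\}_s\subset\mathcal R_{A,\Cal Q}[x_0,\dots,x_M]_{N-d\|I\|}$ whose classes form an $\mathcal R_{A,\Cal Q}$-basis of $W_N^I$. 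The desired basis of the quotient is then the collection $\phi_{(I,s)}:=P_1^{i_1}\cdots P_n^{i_n}\psi_{I,s}$, indexed over all admissible pairs $(I,s)$; that it descends to a basis of $\mathcal R_{A,\Cal Q}[x]_N/\mathcal I_{A,\Cal Q}(V)_N$ follows from Remark \ref{r1} by induction on $I$ in decreasing lex order.

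The key input is that each $W_N^I$ has the predicted dimension. Here Lemma \ref{lm1} is decisive: since $P_1,\dots,P_{n+1}$ have no common zero on $V(\overline{k})$ away from a finite set of $\alpha$'s, the subsequence $P_1,\dots,P_n$ cuts out a zero-dimensional subscheme of $V$, whence the Hilbert function of $\mathcal R_{A,\Cal Q}[x]/(\mathcal I_{A,\Cal Q}(V),P_1,\dots,P_n)$ stabilizes in large degrees at the constant $\deg V\cdot d^n$. Consequently, for all $I$ with $d\|I\|$ not too close to $N$ one has $\dim_{\mathcal R_{A,\Cal Q}}W_N^I=\deg V\cdot d^n$, and summing gives $\sum_I \dim W_N^I=H_V(N)$, which also recovers the leading term $\deg V\cdot N^n/n!$ of the Hilbert polynomial of $V$.

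Given the $\phi$'s, the product admits the factorization
$$\prod_{j=1}^{H_V(N)}\phi_j=P_1^{a_1}\cdots P_n^{a_n}\cdot\prod_{(I,s)}\psi_{I,s},\qquad a_k:=\sum_I i_k\cdot\dim W_N^I.$$
By the symmetry of the count in the indices $k=1,\dots,n$ and the asymptotic $\sum_{\|I\|\leq N/d}i_k\sim (N/d)^{n+1}/(n+1)!$, each exponent satisfies $a_k=\deg V\cdot N^{n+1}/(d(n+1)!)+O(N^n)$. Setting $E:=\min_k a_k$ and $b_k:=a_k-E\geq 0$ (so $\sum_k b_k=O(N^n)$), define $u(N):=\deg V\cdot N^{n+1}/(d(n+1)!)-E=O(N^n)$ and $P:=P_1^{b_1}\cdots P_n^{b_n}\cdot\prod_{(I,s)}\psi_{I,s}$. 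Then
$$\prod_{j=1}^{H_V(N)}\phi_j-(P_1\cdots P_n)^{E}\cdot P\in\mathcal I_{A,\Cal Q}(V)_N,$$
and the degree count $\deg P=N\cdot H_V(N)-ndE=\deg V\cdot N^{n+1}/(n+1)!+O(N^n)$ is exactly the asserted one.

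The main obstacle will be verifying that the successive quotients $W_N^I$ have the predicted dimensions uniformly, because $\mathcal R_{A,\Cal Q}$ is a somewhat delicate ring of fractions. The point is to base-change to $\overline{\mathcal R_{A,\Cal Q}}$, where the regular-sequence argument for $P_1,\dots,P_n$ on $V$ goes through classically thanks to Lemma \ref{lm1}, and then to descend dimensions back to $\mathcal R_{A,\Cal Q}$ using that $A$ is coherent with respect to $\Cal Q$; the special-representative formalism introduced before Definition \ref{defi1.5s} ensures that the resulting $\phi_j$'s and $P$ genuinely lie in $\mathcal R_{A,\Cal Q}[x_0,\dots,x_M]$ rather than merely in a larger extension.
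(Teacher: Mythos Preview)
The paper does not give a proof of this lemma; it simply records that the result follows ``by arguments as \cite{STT}'' once Definition~\ref{dn1} is in place. Your overall strategy---filtering $\mathcal R_{A,\Cal Q}[x]_N/\mathcal I_{A,\Cal Q}(V)_N$ by lex-ordered monomials $P^I$, computing the successive quotients via the regular-sequence property furnished by Lemma~\ref{lm1}, lifting bases $\psi_{I,s}$ to $\phi_{(I,s)}=P^I\psi_{I,s}$, and reading off the exponent $a_k=\sum_I i_k\dim W_N^I$ in the product---is exactly the \cite{STT} scheme the paper is invoking.

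There is, however, a concrete misreading of Definition~\ref{dn1} in your displayed formula for $W_N^I$. In this paper $\Cal L_N^I$ is \emph{not} the $I$-th filtration piece inside degree $N$; it is a subspace of $\mathcal R_{A,\Cal Q}[x]_{N-d\|I\|}$, namely the kernel of the ``leading term at $I$'' map $\gamma\mapsto[P^I\gamma]$ into the $I$-th successive quotient. Hence your expression $\Cal L_N^I\big/\big(\sum_{I'>I}\Cal L_N^{I'}+\cdots\big)$ is ill-formed (the $\Cal L_N^{I'}$ sit in pairwise different homogeneous degrees and cannot be summed), and the roles are inverted: the successive quotient you want is $\mathcal R_{A,\Cal Q}[x]_{N-d\|I\|}/\Cal L_N^I$, which by Remark~\ref{r1}(i) and the \cite{STT} argument equals $\big(\mathcal R_{A,\Cal Q}[x]/(\mathcal I_{A,\Cal Q}(V),P_1,\dots,P_n)\big)_{N-d\|I\|}$. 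With that correction your construction of $\psi_{I,s}$, the exact identity $\prod\phi_{(I,s)}=P_1^{a_1}\cdots P_n^{a_n}\prod\psi_{I,s}$, and the asymptotic $a_k=\frac{\deg V\cdot N^{n+1}}{d(n+1)!}+O(N^n)$ all go through. One further simplification: $\mathcal R_{A,\Cal Q}$ is defined in the paper as a \emph{field} of fractions, so the dimension counts are ordinary linear algebra over a field and no descent from $\overline{\mathcal R_{A,\Cal Q}}$ is needed; the coherence of $A$ is used only (via inertia forms, as in the discussion around \eqref{t2}) to guarantee that $P_1,\dots,P_{n+1}$ have empty common zero on $V(\overline k)$ for all but finitely many $\alpha$, which is what drives the Hilbert-function stabilisation at $\deg V\cdot d^n$.
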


\section{Proof of Theorem \ref{Schmidt}}
\begin{proof}
By \cite[Lemma 2.1 ]{CRY3}, there exists an infinite index subset $A\subset \Lambda$ which is coherent with respect to $\mathcal Q.$   By Remark \ref{remark}, we may assume that the polynomials $Q_j$'s have the same degree $d\ge 1$ and their coefficients  belong to the field  $\mathcal R_{A, \Cal Q}.$  By the fact that for any infinite subset $B$ of $A$, then $B$ is also coherent with respect to $\mathcal Q$ and $\Cal R_{B,\Cal Q}\subset\Cal R_{A,\Cal Q},$ in our proof, we may freely pass to infinite subsets. For simplicity, we still denote these infinite subsets by $A.$
We set
$$ \Cal J=\{(i_1, \dots, i_{m+1}); 1\le i_j\le q\; \text{for}\; j=1, \dots, q, i_1, \dots, i_{m+1}\; \text{distinct}\}. $$
For each $J=(i_1,\dots, i_{m+1})\in \Cal J,$ we denote by $P_{J, 1}\dots, P_{J,n+1}$ the moving homogeneous polynomials obtained in Lemma \ref{lm1} with respective to the family of moving hypersurfaces $\{Q_{i_1}, \dots, Q_{i_{m+1}}\}.$ It is easy to see that, for each $v\in S,$ there exists a positive function $h_v\in \mathcal C_{\bf x}$ such that
\begin{eqnarray}\label{ctm1}
||P_{J, t}(x(\alpha))||_v\le h_v(\alpha)\max_{1\le j\le m-n+t}||Q_{i_j}(x(\alpha))||_v
\end{eqnarray}
for all $\alpha\in A$ outside a finite subset, for all $t=\kappa+1,\dots, n+1.$ Here, the function $h_v$  may choose common for all $J\in \mathcal J.$
From the assumption, for each $a\in \mathcal R_{A, \Cal Q},$ and $v\in M(k)$, we have, for all $\alpha \in A$,
\begin{eqnarray}\label{ct21s}
\log ||a(\alpha)||_v\le \sum_{v\in M(k)} \log^{+}||a(\alpha)||_v=h(a(\alpha))\le o(h(x(\alpha))).
\end{eqnarray}
For each $v\in S$, and $\alpha \in A$, there exist a subset $J(v, \alpha)=\{i_1(v, \alpha), \dots, i_{q}(v, \alpha)\}$ of $\{1,\dots,q\}$ such that
\begin{eqnarray*}
  ||Q_{i_1(v, \alpha)}(\alpha)(x(\alpha))||_v\le   ||Q_{i_2(v, \alpha)}(\alpha)(x(\alpha))||_v \dots& \le ||Q_{i_{q}(v, \alpha)}(\alpha)(x(\alpha))||_v.
\end{eqnarray*}
We have $J(v, \alpha)=(i_1(v, \alpha), \dots, i_{m+1}(v, \alpha))\subset \mathcal J.$  Denote by $P_{J(v,\alpha), 1}, \dots, P_{J(v,alpha),n+1}$ by the moving homogeneous polynomials obtained in Lemma \ref{lm1} and by $D_{J(v,\alpha), j}$ the hypersurface defined by $P_{J(v,\alpha), j},$ $j=1,\dots,n+1.$  Note that, we may freely pass to infinite subsets, then we may assume
$D_{J(v, \alpha),1}, \dots, D_{J(v, \alpha),n+1}$ are in general position with respective $V.$
Apply to Lemma \ref{lem21s} for $\mathcal Q=\{D_{J(v, \alpha),1}, \dots, D_{J(v, \alpha),n+1}\},$ and using (\ref{ctm1}), there exist functions $g_{0, v}^{J(v, \alpha)}, g_v^{J(v, \alpha)} \in \mathcal C_{\bf x}$ (depend on $l_{2, v}$)  such that
\begin{eqnarray}\label{ctm2}
||x(\alpha)||_{v}^{d}\le g_{0, v}^{J(v, \alpha)}(\alpha)\max_{1\le j\le n+1}||P_{J(v, \alpha), j}(\alpha)(x(\alpha))||_v\le g_v^{J(v, \alpha)}(\alpha) ||Q_{i_{m+1}}(\alpha)(x(\alpha))||_v.
\end{eqnarray}
Therefore, from (\ref{ctm2}), we get
\begin{eqnarray}\label{ctm3}
&&\prod_{i=1}^{q}\dfrac{||x(\alpha)||_{v}^{d}}{||Q_{i}(\alpha)(x(\alpha))||_v}\\\nonumber
&\le& (g_v^{J(v, \alpha)})^{q-m}(\alpha)\prod_{j=1}^{m}\dfrac{||x(\alpha)||_{v}^{d}}{||Q_{i_j(v, \alpha)}(\alpha)(x(\alpha))||_v}
\notag\\\nonumber
&=&(g_v^{J(v, \alpha)})^{q-m}(\alpha)\dfrac{||x(\alpha)||_{v}^{md}}{\prod_{j=1}^{\kappa}||Q_{i_j(v, \alpha)}(\alpha)(x(\alpha))||_v\prod_{j=\kappa+1}^{m-n+\kappa}||Q_{i_j(v, \alpha)}(\alpha)(x(\alpha))||_v}\\\nonumber
&&\times \dfrac{1}{\prod_{j=m-n+\kappa+1}^{m}||Q_{i_j(v, \alpha)}(\alpha)(x(\alpha))||_v}\notag\\\nonumber
&\le& (g_v^{J(v, \alpha)})^{q-m}(\alpha)h_v^{n-\kappa-1}(\alpha)\dfrac{||x(\alpha)||_{v}^{md}}{
\prod_{j=1}^{n}||P_{J(v, \alpha), j}(\alpha)(x(\alpha))||_v\prod_{j=\kappa+1}^{m-n+\kappa}||Q_{i_j(v, \alpha)}(\alpha)(x(\alpha))||_v}.
\end{eqnarray}

From (\ref{ctm3}), we get
\begin{eqnarray}\label{a1}
\sum_{i=1}^{q}\lambda_{D_i(\alpha),v}({\bf x}(\alpha))\le \sum_{j=1}^{n}\lambda_{D_{J(v,\alpha)},j,v}({\bf x}(\alpha))+\sum_{j=1}^{m-n}\lambda_{D_{J(v,\alpha)},j,v}({\bf x}(\alpha))+o(h({\bf x}(\alpha)))
\end{eqnarray}
if $m-n\le \kappa$ and
\begin{eqnarray}\label{a2}
&&\sum_{i=1}^{q}\lambda_{D_i(\alpha),v}({\bf x}(\alpha))\notag\\&\le& \sum_{j=1}^{n}\lambda_{D_{J(v,\alpha)},j,v}({\bf x}(\alpha))+\sum_{j=\kappa+1}^{m-n+\kappa}\lambda_{D_{i_j(v,\alpha),v}(\alpha)}({\bf x}(\alpha))\notag\\
&\le& \sum_{j=1}^{n}\lambda_{D_{J(v,\alpha)},j,v}({\bf x}(\alpha))+\sum_{j=\kappa+1}^{m-n+\kappa}\lambda_{D_{i_j(v,\alpha),v}(\alpha)}({\bf x}(\alpha))
+o(h({\bf x}(\alpha)))\notag\\
&\le& \sum_{j=1}^{n}\lambda_{D_{J(v,\alpha)},j,v}({\bf x}(\alpha))+\dfrac{m-n}{\kappa}\sum_{j=\kappa+1}^{2\kappa}\lambda_{D_{i_j(v,\alpha),v}(\alpha)}({\bf x}(\alpha))
+o(h({\bf x}(\alpha)))\notag\\
&=&\sum_{j=1}^{n}\lambda_{D_{J(v,\alpha)},j,v}({\bf x}(\alpha))+\dfrac{m-n}{\kappa}\sum_{j=1}^{\kappa}\lambda_{D_{J(v,\alpha)},j,v}({\bf x}(\alpha))
+o(h({\bf x}(\alpha)))
\end{eqnarray}
if $m-n\ge\kappa.$ Combine (\ref{a1}) and (\ref{a2}), we get
\begin{eqnarray}\label{a3}
\sum_{i=1}^{q}\lambda_{D_i(\alpha),v}({\bf x}(\alpha))&\le& \sum_{j=1}^{n}\lambda_{D_{J(v,\alpha)},j,v}({\bf x}(\alpha))+o(h({\bf x}(\alpha)))\notag\\&&+\dfrac{m-n}{\max\{1,\min \{m-n,\kappa\}\}}\sum_{j=1}^{n}\lambda_{D_{J(v,\alpha)},j,v}({\bf x}(\alpha)).
\end{eqnarray}
From (\ref{a3}), we obtain
\begin{eqnarray*}
&\log \prod_{j=1}^{q}\dfrac{||x(\alpha)||_v^{d}}{||Q_j(\alpha)(x(\alpha))||_v}\\
&\le \left(\dfrac{m-n}{\max\{1,\min \{m-n,\kappa\}\}}+1\right)\log \prod_{j=1}^{n}\dfrac{||x(\alpha)||_v^{d}}{||P_{J(v, \alpha), j}(\alpha)(x(\alpha))||_v}+o(h({\bf x}(\alpha))).
\end{eqnarray*}
This implies
\begin{eqnarray}\label{ct22s}
&&\log \prod_{j=1}^{q}||Q_j(\alpha)(x(\alpha))||_v\\&\ge& \left(q-\left(\dfrac{m-n}{\max\{1,\min \{m-n,\kappa\}\}}+1\right)n\right)d\log ||x(\alpha)||_v\notag\\
&&+(\dfrac{m-n}{\max\{1,\min \{m-n,\kappa\}\}}+1)\log \prod_{j=1}^{n}||P_{J(v, \alpha), j}(\alpha)(x(\alpha))||_v+o(h({\bf x}(\alpha)))\notag.
\end{eqnarray}

By Lemma \ref{L4s}, there exist homogeneous polynomials $\phi_1^{J(v, \alpha)}, \dots, \phi_{H_V(N)}^{J(v, \alpha)}$(depend on $J(v, \alpha)$) in $\mathcal R_{A, \Cal Q}[x_0, \dots, x_M]_N$ and functions $u(N), v(N)$ (common for all $J(v,\alpha))$ such that $\{\phi_i^{J(v, \alpha)}\}$  a basis of $\mathcal R_{A, \Cal Q}-$ vector space $\dfrac{\mathcal R_{A, \Cal Q}[x_0, \dots, x_M]_N}{\mathcal I_{A, \Cal Q}(V)_N}$ and
$$ \prod_{\ell=1}^{H_V(N)}\phi_\ell^{J(v, \alpha)}-(P_{J(v,\alpha), 1}\dots P_{J(v, \alpha), n})^{\dfrac{\deg V \cdot N^{n+1}}{d(n+1)!}-u(N)}P_{J(v,\alpha)} \in  \mathcal I_{A, \Cal Q}(V)_N,$$
where  $P_{J(v,\alpha)}\in \mathcal R_{A, \Cal Q}[x_0,\dots,x_M]$ is a homogeneous polynomials of degree
$\frac{\deg V\cdot N^{n+1}}{(n+1)!}+v(N), v(N)=O(N^n).$ Thus, for all ${\bf x}(\alpha) \in V(k)$, we have
\begin{eqnarray*} \prod_{\ell=1}^{H_V(N)}\phi_\ell^{J(v, \alpha)}(\alpha)(x(\alpha))=\Big(\prod_{i=1}^nP_{J(v,\alpha), i}(\alpha)(x(\alpha))\Big)^{\dfrac{\deg V \cdot N^{n+1}}{d(n+1)!}-u(N)}P_{J(v,\alpha)}(\alpha)(x(\alpha)) .
\end{eqnarray*}\par

On the other hand, it is easy to see that there exist $h_{J(v, \alpha)}\in \mathcal C_{\bf x}$ such that
\begin{eqnarray*}
&&||P_{J(v,\alpha)}(\alpha)(x(\alpha))||_v \\&\leq& ||x(\alpha)||_v^{\deg P_{J(v,\alpha)}}h_{J(v, \alpha)}(\alpha)\\
&=&||x(\alpha)||_v^{\frac{\deg V\cdot N^{n+1}}{(n+1)!}+v(N)}h_{J(v, \alpha)}(\alpha).
\end{eqnarray*}
Therefore,
\begin{eqnarray*}
&&\log \prod_{\ell=1}^{H_V(N)}||\phi_\ell^{J(v, \alpha)}(\alpha)(x(\alpha))||_v\\
&\leq&\Big( \dfrac{\deg V . N^{n+1}}{d(n+1)!}-u(N)\Big)\cdot\log ||\prod_{i=1}^nP_{J(v,\alpha), i}(\alpha)(x(\alpha))||_v\\
&&+\log^+ h_{J(v, \alpha)}(\alpha)+( \dfrac{\deg V . N^{n+1}}{(n+1)!}+v(N))\log ||x(\alpha)||_v
\end{eqnarray*}
This implies that there are functions $\omega_1(N),\omega_2(N)\leq O(\frac{1}{N})$ such that
\begin{eqnarray}\label{ct23s}
&&\log ||\prod_{i=1}^nP_{J(v,\alpha), i}(\alpha)(x(\alpha))||_v\notag\\&\geq&  \Big(\dfrac{d(n+1)!}{\deg V . N^{n+1}}+\omega_1(N) \Big)\cdot\log \prod_{\ell=1}^{H_V(N)}||\phi_\ell^{J(v, \alpha)}(\alpha)(x(\alpha))||_v \notag\\
&&-\Big(\dfrac{d(n+1)!}{\deg V . N^{n+1}}+\omega_1(N) \Big)\log ^+ h_{J(v, \alpha)}(\alpha)-(d+\omega_2(N))\log ||x(\alpha)||_v.
\end{eqnarray}

\noindent By (\ref{ct22s}) and (\ref{ct23s}), we have
\begin{eqnarray}\label{ct26s}
&&\log \prod_{j=1}^{q}||Q_j(\alpha)(x(\alpha))||_v\\&\ge& \left(q-\left(\dfrac{m-n}{\max\{1,\min \{m-n,\kappa\}\}}+1\right)(n+1)\right)d\log ||x(\alpha)||_v\notag\\
&&+\left(\dfrac{m-n}{\max\{1,\min \{m-n,\kappa\}\}}+1\right)\Big(\Big(\dfrac{d(n+1)!}{\deg V . N^{n+1}}+\omega_1(N)\Big)\notag\\&&\times\log \prod_{\ell=1}^{H_V(N)}||\phi_\ell^{J(v, \alpha)}(\alpha)(x(\alpha))||_v \notag\\
&&-\Big(\dfrac{d(n+1)!}{\deg V . N^{n+1}}+\omega_1(N) \Big)\log^+ h_{J(v, \alpha)}(\alpha)-\omega_2(N)\log ||x(\alpha)||_v\Big)+o(h({\bf x}(\alpha)))\notag.
\end{eqnarray}

We fix homogeneous polynomials $\Phi_1, \dots, \Phi_{H_V(N)} \in \mathcal R_{A, \{Q_j\}_{j=1}^{q}}[x_0, \dots, x_M]_N$ such that they form a basis of $\mathcal R_{A, \Cal Q}-$ vector space $\dfrac{\mathcal R_{A, \Cal Q}[x_0, \dots, x_M]_N}{{{\mathcal I}_{A, \Cal Q}(V)}_N}.$ Then, there exist homogeneous linear polynomials
$$L_1^{J(v, \alpha)}, \dots, L_{H_V(N)}^{J(v, \alpha)}\in \mathcal R_{A, \Cal Q}[y_1, \dots, y_{H_V(N)}]$$ such that they are linear independent over $\mathcal R_{A, \{Q_j\}_{j=1}^{q}}$ and
$$ \phi_\ell^{J(v, \alpha)}-L_\ell^{J(v,\alpha)}(\Phi_1, \dots, \Phi_{H_V(N)}) \in \mathcal I_{A, \Cal Q}(V)_N,$$
for all $\ell=1, \dots, H_V(N).$ It is clear that  $h(L_\ell^{J(v, \alpha)}(\beta))=o(h(x(\beta))), \beta\in A$ and $A$ is coherent with respect to $\{L_\ell\}_{\ell=1}^{H_V(N)}.$

We have,
\begin{eqnarray}\label{ct24s}
 \prod_{\ell=1}^{H_V(N)}||\phi_\ell^{J(v, \alpha)}(\alpha)(x(\alpha))||_v=\prod_{\ell=1}^{H_V(N)}||L_\ell^{J(v, \alpha)}(\Phi_1, \dots, \Phi_{H_V(N)})(\alpha)(x(\alpha))||_v.
\end{eqnarray}
We write
$$L_\ell^{J(v, \alpha)}(y_1, \dots, y_{H_V(N)})=\sum_{s=1}^{H_V(N)}g_{\ell r}y_s, \quad g_{\ell s}\in\mathcal R_{A, \Cal Q}.$$
Since $L_1^{J(v, \alpha)}, \dots, L_{H_N(V)}^{J(v, \alpha)}$ are linear independent over $\mathcal R_{A, \Cal Q},$   we have $\det (g_{\ell s})\ne 0\in \mathcal R_{A,\Cal Q}.$ Thus, due to coherent property of $A,$ $\det(g_{\ell s})(\beta)\ne 0$  for all $\beta\in A$, outside a finite subset of $A.$ By passing to an infinite subset if necessary, we may assume that $L_1^{J(v,\alpha)}(\beta),\dots,L_{H_V(N)}^{J(v,\alpha)}(\beta)$ are lineraly independent over $k$ for all $\beta\in A.$

Now we consider the sequence of points $F(\alpha)=[\Phi_1(x(\alpha)), \dots, \Phi_{H_N(V)}(x(\alpha))]$ from $A$ to $\P^{H_V(N)-1}(k)$ and
moving hyperplanes $\Cal L:=\{L_1^{J(v, \alpha)}, \dots, L_{H_N(V)}^{J(v, \alpha)}\}$ in $\P^{H_V(N)-1}(k),$ indexed by $A.$ We claim that $F$ is linearly nondegenerate with respect to $\Cal L.$ Indeed, ortherwise, then there is a linear form $L\in\Cal R_{B, \Cal L}[y_1,\dots, y_{H_N(V)}]$ for some infinite coherent subset $B\subset A,$ such that $L(F)|_B\equiv 0$ in $B,$ which contradicts to the assumption that $x$ is algebraically nondegenerate with respect to $\Cal Q.$

By Theorem A, for any $\epsilon >0,$ there is an infinite subset of $A$ (common for all $J(v,\alpha)),$ denoted again by $A$, such that
\begin{eqnarray}\label{ct25s}
\sum_{v\in S} \log \prod_{\ell=1}^{H_V(N)}\dfrac{||F(\alpha)||_v||L_\ell^{J(v, \alpha)}(\alpha)||_v}{||L_\ell^{J(v, \alpha)}(\alpha)(F(\alpha))||_v}\le (H_V(N)+\epsilon)h(F(\alpha)),
\end{eqnarray}
for all $\alpha \in A.$\par

Combining with (\ref{ct26s}) and (\ref{ct25s}) we have
\begin{eqnarray}\label{ct27s}
&&\sum_{v\in S}\sum_{j=1}^{q}\log \dfrac{||x(\alpha)||_v^{d}}{||Q_j(\alpha)(x(\alpha))||_v}\notag\\
&\le&\left(\dfrac{m-n}{\max\{1,\min \{m-n,\kappa\}\}}+1\right)(n+1)d \sum_{v\in S}\log ||x(\alpha)||_v\notag+\omega_2(N)\sum_{v\in S}\log ||x(\alpha))||_v\notag\\
&&+\left(\dfrac{m-n}{\max\{1,\min \{m-n,\kappa\}\}}+1\right)(\dfrac{d(n+1)!}{\deg V \cdot N^{n+1}}+\omega_1(N))\notag\\
&&\times\sum_{v\in S}\log \prod_{\ell=1}^{H_V(N)}\dfrac{||F(\alpha)||_v||L_\ell^{J(v,\alpha)}(\alpha)||_v}{||L_\ell^{J(v, \alpha)}(\alpha)(x(\alpha))||_v}\notag\\
&&-H_V(N)\left(\dfrac{m-n}{\max\{1,\min \{m-n,\kappa\}\}}+1\right)\left(\dfrac{d(n+1)!}{\deg V \cdot N^{n+1}}+\omega_1(N)\right)\sum_{v\in S}\log ||F(\alpha)||_v\notag\\
&&+o(h(x(\alpha))).
\end{eqnarray}
Since the above inequality is independent of the choice of components of ${\bf x}(\alpha)$, we can choose the components of ${\bf x}(\alpha)$ being $S$-integers so that
\begin{eqnarray}\label{ctm4}
\sum_{v\in S}\log ||x(\alpha)||_v&=&h({\bf x}(\alpha))+O(1),  \text{and}\notag\\
\sum_{v\in S}\log\Vert F(\alpha)\Vert_v&\le& h(F(\alpha))+O(1) \le Nh({\bf x}(\alpha))+O(1).
\end{eqnarray}

Combining with (\ref{ct27s}) and (\ref{ctm4}), we obtain
\begin{eqnarray*}
&&\sum_{v\in S}\sum_{j=1}^{q}\log \dfrac{||x(\alpha)||_v^{d}}{||Q_j(\alpha)(x(\alpha))||_v}\\&\le& \left(\dfrac{m-n}{\max\{1,\min \{m-n,\kappa\}\}}+1\right)(n+1)dh({\bf x}(\alpha))+\omega_2(N) h({\bf x}(\alpha))\\
&&+\left(\dfrac{m-n}{\max\{1,\min \{m-n,\kappa\}\}}+1\right)\Big(\dfrac{d(n+1)!}{\deg V \cdot N^{n+1}}+\omega_1(N)\Big)(H_V(N)+\epsilon)h(F(\alpha))\\
&&-H_V(N)\left(\dfrac{m-n}{\max\{1,\min \{m-n,\kappa\}\}}+1\right)\Big(\dfrac{d(n+1)!}{\deg V \cdot N^{n+1}}+\omega_1(N)\Big)h(F(\alpha))\\
&&+o(h({\bf x}(\alpha)).
\end{eqnarray*}
Hence, we obtain
\begin{eqnarray}\label{ct28s}
&&\sum_{v\in S}\sum_{j=1}^{q}\log \dfrac{||x(\alpha)||_v^{d}||Q_j(\alpha)||_v}{||Q_j(\alpha)(x(\alpha))||_v}\\&\le&  \left(\dfrac{m-n}{\max\{1,\min \{m-n,\kappa\}\}}+1\right)(n+1)dh({\bf x}(\alpha))+\omega_2(N)h({\bf x}(\alpha))\notag\\
&&+\epsilon\left(\dfrac{m-n}{\max\{1,\min \{m-n,\kappa\}\}}+1\right)\Big(\dfrac{d(n+1)!}{\deg V . N^{n+1}}+\omega_1(N)\Big)h(F(\alpha))+o(h({\bf x}(\alpha)))\notag.
\end{eqnarray}
Here, we note that $h(D_j(\alpha))=o(h({\bf x}(\alpha)))$ and there is only finite points ${\bf x}$ such that $h({\bf x})$ is bounded, combining (\ref{ctm4}) and (\ref{ct28s}), for any $\varepsilon>0,$ by our choice with $\omega_1,\omega_2$ for  $N$ large enough, we get
\begin{eqnarray*}
\sum_{v\in S}\sum_{j=1}^{q}\log \dfrac{||x(\alpha)||_v^{d}||Q_j(\alpha)||_v}{||Q_j(\alpha)(x(\alpha))||_v}\le  \left(\left(\dfrac{m-n}{\max\{1,\min \{m-n,\kappa\}\}}+1\right)(n+1)+\varepsilon\right)d h({\bf x}(\alpha)),
\end{eqnarray*}
for all $\alpha\in A$ outside finite set. This completes the proof of Theorem \ref{Schmidt}.
\end{proof}

\noindent Tingbin Cao\\
Department of Mathematics\\
 Nanchang University\\
Jiangxi 330031, P. R. China\\
e-mail: tbcao@ncu.edu.cn\\

\noindent Nguyen Van Thin\\
Department of Mathematics\\
 Thai Nguyen University of Education\\
Luong Ngoc Quyen Street,
Thai Nguyen city, Vietnam.\\
e-mail: thinmath@gmail.com\\

\end{document}